  \crefname{section}{Section}{Sections}
  \crefname{figure}{Figure}{Figures}
  \crefname{theorem}{Theorem}{Theorems}
  \crefname{lemma}{Lemma}{Lemmas}
  \crefname{proposition}{Proposition}{Propositions}  
  \crefname{corollary}{Corollary}{Corollaries}
  \crefname{definition}{Definition}{Definitions}
  \crefname{example}{Example}{Examples}
  \crefname{remark}{Remark}{Remarks}
\newtheorem{theorem}{Theorem}[section]
\newtheorem{lemma}[theorem]{Lemma}
\newtheorem{proposition}[theorem]{Proposition}
\newtheorem{definition}[theorem]{Definition}
\newtheorem{remark}[theorem]{Remark}
\DeclarePairedDelimiter\floor{\lfloor}{\rfloor}
\title{{ On the analytic extension of  Random Riemann Zeta Functions for some probabilistic models of the primes}}
 \author{Vlad Margarint \thanks{University of North Carolina at Charlotte, vmargari@charlotte.edu; \\URL: \href{https://margarintvlad.com/}{margarintvlad.com}} \hspace{2mm} and Stanislav Molchanov \thanks{University of North Carolina at Charlotte, smolchan@charlotte.edu;\\URL: \href{https://math.charlotte.edu/directory/stanislav-molchanov/}{Website}}}
\date{October 2024}
\begin{document}
\maketitle 
\begin{abstract}
 The first step in the formulation and study of the Riemann Hypothesis is the analytic continuation of the Riemann Zeta Function (RZF) in the full Complex Plane with a pole at $s=1$. In the current work, we study the analytic continuation of two random versions of RZF using, for $Re s>1$, the Euler representation of ZF in terms of the product of functions over primes. In the first case, we substitute in the Euler product pseudo-prime numbers from the famous Cram\'er Model. In the second case, we use pseudo-primes with local symmetries.  We show that in the Cram\'er case analytic continuation is possible $\mathbb{P}$-a.s. for $Res>1/2$, but not through the critical line $Re s=1/2.$ In the second case, we show that the analytic continuation is possible in a larger domain. We also study for the Cram\'er pseudo-primes several problems from Additive Number Theory.
\end{abstract}

\section{Introduction}

The idea that the primes are randomly distributed in the series of natural numbers in explicit form was formulated in the beginning of the 19th century (Legendre, Gauss) but only H. Cram\'er around 100 years later (1936) published the paper (\citep{Cramer}) where the primes were included as an element of the probability space of random sequences. Namely, Cram\'er (\citep{Cramer}) called each natural number $n \geq 4$ quasi-prime with probability $\frac{1}{\ln n}$ independently on other $n'\neq n.$

In this paper, we are interested in studying two probabilistic models of the primes and the corresponding Random Riemann Zeta functions associated with these models. 
For recent works in which new probabilistic models of the primes are developed, see for example \citep{Tao}. 


This problem we study is linked with the famous Riemann Hypothesis. This famous problem concerns the non-trivial zeroes of the Riemann Zeta function that are conjectured to all be on the critical line $Re s=1/2.$ 

One of the important steps in the study of this problem is the analytic continuation of the Riemann Zeta function, which is a-priori defined only on the region $Re s>1$ in the Complex Plane. In this region of the Complex Plane The Riemann Zeta function is linked with a certain product of primes through the Euler Formula (see \citep{tit}).

In this work, we consider probabilistic versions of the Riemann Zeta Function which are presented as a product of fractions similar to the ones from Euler product but not associated with the true primes but with probabilistic models of the primes. The first such model that we consider consists of Cram\' er's quasi-primes. In the first part of the paper, we study the analytic continuation of this Random Zeta function. We provide a strategy to show the analytic continuation of this function up to $Re s =1/2$ for almost every realization of the quasi-primes. We also study the problem of the impossibility of analytic continuation of this function through the critical line $Re s =1/2$. 

Our method relies on the use of the Kolmogorov's Two-Series Theorem along with a technical lemma. These techniques are applied after we obtain a representation of the corresponding Random Riemann Zeta function in terms of the complex power of quasi primes.  
In the following section, we show how one can apply techniques from Probability Theory, namely the Borel-Cantelli Lemmas, to study some problems in the spirit of the ones from Number Theory in the framework of the quasi primes.

In the last section, in order to contrast it with the Cram\'er Model we introduce a model of pseudo primes that possess correlations and exhibit local symmetries. We study the analytic continuation of the corresponding Random Riemann Zeta function and we conclude that the region on which one can analytically extend this Random Riemann Zeta function is larger than the region of the corresponding Random Riemann Zeta function for the Cram\'er Model.





The paper is organized as follows. In the first section after the Introduction, we discuss the Cram\'er Model of the primes, we define the corresponding Random Riemann Zeta function, and we state our main result. In Section 3 we prove our main Theorem and in Section 4 we comment on an application of Probability Theory techniques in the study of some Number Theory problems phrased in the language of the Cram\'er quasi primes. In Section 5 we introduce a model of pseudo primes that, in contrast to the Cram\'er Model, has correlations and exhibits local symmetries. We then study the analytic extension of the Random Riemann Zeta Function corresponding to this model. 
\vspace{2mm}

\textbf{Acknowledgements:}
The authors would like to thank Academician I. A. Ibragimov, Md. Kashif Jamal, D. Beliaev and James Rickards for useful discussions. VM acknowledges the support of the UNCC FRG Grant Fund No. 111432.

\section{The Cram\'er Model of the Primes and the corresponding Random Riemann Zeta Function}
Cram\'er (\citep{Cramer}) called the numbers $n \geq 4$ \textit{quasi-primes} with probability $\frac{1}{lnn}$ independently, for different $n$.  Let $\Pi(w)$ denote the set of quasi-primes.

We include $2$ and $3$ in $\Pi(w)$, that is we have 
$$\Pi(\omega)=\{2<3 <p_3(\omega)<p_4(\omega)<\ldots \}.$$

Let us also consider $\Pi(n, \omega)=\#\{p_i(\omega) \leq n \}$, for $n \geq 4.$

The joint distribution of $p_i$, $i \geq 1$ is explicit. 
For example, $$\mathbb{P}(p_2=3, p_3=6, p_4=11, p_5=12)=1\cdot\prod_{i=4}^5\left(1-\frac{1}{\ln i}\right)\frac{1}{\ln6}\cdot \prod_{i=7}^{10}\left(1-\frac{1}{\ln i}\right)\frac{1}{\ln11}\cdot \frac{1}{\ln12}.$$

Since $\Pi(n, \omega)=\#\{p_i(\omega) \leq n \}=2+\sum_{k=4}^n\epsilon_k$, with $\epsilon_k=1$, with probability $\frac{1}{\ln k}$ and $\epsilon_k=0$ with probability $1-\frac{1}{\ln k}$, for $n \geq 4$, we have 

\begin{equation}
\mathbb{E}[\Pi(n, \omega)]=2+\sum_{k=4}^n\frac{1}{\ln k}.
\end{equation}
We note that $\epsilon_k$ is a Bernoulli sequence with slowly decreasing probability of success. 

We further obtain that 

$$\mathbb{E}[\Pi(n, \omega)]=2+\sum_{k=4}^n\frac{1}{\ln k}=\int_{e}^n\frac{dx}{ln x}+O(1)=Li(n)+O(1).$$

We note that we use $Li(n)=\int_{e}^n\frac{dx}{\ln x},$ for $n\geq e.$

Similarly, we have that $$B_n^2=Var[\Pi(n, \omega)]=\sum_{k=4}^n\frac{1}{\ln k}\left(1-\frac{1}{\ln k}\right)=\frac{n}{\ln n}-\frac{3}{2}\frac{n}{\ln^2n}+O\left(\frac{n}{\ln^3n}\right).$$

Using that $\epsilon_k$, $k \geq 1$, is bounded, and the fact that $B_n\to \infty$, we obtain from the Kolmogorov's form of the Law of the Iterated Logarithm that

\begin{equation}
\limsup_{n \to \infty}\frac{|\Pi(n, \omega)-\mathbb{E}[\Pi(n, \omega)]|}{B_n\sqrt{2\ln \ln n}}=1,
\end{equation}
i.e. $\Pi(n,\omega)=Li(n)+O(\sqrt{n})$.

It is known that (see, \citep{tit}) a similar relation
$$ |\pi(n)-Li(n)|=O(\sqrt{n}\ln n)$$ 
for true primes is equivalent to the fundamental Riemann conjecture on the zeros of the $\zeta$ function:

\begin{equation}\label{Eulerformula1}
\zeta(s)=\sum_{n=1}^{\infty}\frac{1}{n^s}=\prod_{n=1}^{\infty}\frac{1}{(1-\frac{1}{p_n^s})}, 
\end{equation}
where $\{p_n, n\geq 1\}=\{2,3,5,\ldots \}$ is the set of true primes. Euler discovered the formula \eqref{Eulerformula} and used it only for real $z$. Riemann studied the $\zeta(\cdot)$ function for complex $s=\sigma+it \in \mathbb{C}.$

In particular, he constructed an analytic continuation of $\zeta(s)$ which is defined initially only for $Re s > 1$, and proved the functional equation connecting $\zeta(s)$, $\zeta(1-s)$ and the Gamma function $\Gamma(s)$. In addition, he formulated his conjecture (see, for example, \citep{tit}).

Using the similarity with \eqref{Eulerformula1}, we define for $\pi(\omega)$ the corresponding random $\zeta=\zeta(s, \omega)$ function by the formula 

\begin{equation}
\zeta(s, \omega)=\prod_{n=1}^{\infty}\frac{1}{(1-\frac{1}{p_n^s(\omega)})}
\end{equation}

in the beginning for $Re s >1.$ In this region, the infinite product converges $\mathbb{P}$-a.s. (since $p_n(\omega)\sim n\ln n$ $\mathbb{P}$-a.s.) and $|\zeta(s, \omega)|>0,$ i.e. there are no zeros. Our goal is to construct the analytic continuation of the $\zeta(s, \omega)$ in a larger domain and prove the following.

\begin{theorem}
a) $\zeta(s, \omega)$ has analytic continuation on the region $Re s> 1/2$ except at a singular point $s_0=1$, which is the simple pole.\\
b) For any fixed $t \in \mathbb{R},$ we have that $\limsup_{\sigma \to 1/2^{+}}|\zeta(\sigma+it)|=\infty$, $\mathbb{P}$-a.s.
\end{theorem}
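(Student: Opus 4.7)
The plan is to work with $\log\zeta(s,\omega)$ via the logarithmic expansion of the Euler product,
\[
\log\zeta(s,\omega) \;=\; -\sum_{n\geq 1}\log\!\Bigl(1-p_n(\omega)^{-s}\Bigr)
\;=\; \sum_{n\geq 1}\sum_{k\geq 1}\frac{1}{k\,p_n(\omega)^{ks}},
\]
and to separate the $k=1$ contribution from the rest. Since $p_n(\omega)\sim n\log n$ $\mathbb{P}$-a.s., the part $\sum_{n}\sum_{k\geq 2}\frac{1}{k\,p_n^{ks}}$ converges absolutely and uniformly on compacta inside $\{\operatorname{Re} s>1/2\}$ and is therefore holomorphic there. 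The entire difficulty is the $k=1$ piece
\[
P(s,\omega) \;=\; 2^{-s}+3^{-s}+\sum_{m\geq 4}\epsilon_m\, m^{-s},
\]
which I would split as deterministic mean plus centered fluctuation,
\[
P(s,\omega) \;=\; 2^{-s}+3^{-s}+\underbrace{\sum_{m\geq 4}\frac{m^{-s}}{\log m}}_{A(s)}+\underbrace{\sum_{m\geq 4}\Bigl(\epsilon_m-\tfrac{1}{\log m}\Bigr)m^{-s}}_{B(s,\omega)}.
\]
Term-by-term differentiation gives $A'(s)=1+2^{-s}+3^{-s}-\zeta(s)$, so $A$ inherits an analytic continuation to $\{\operatorname{Re} s>0\}\setminus\{1\}$ with only a logarithmic singularity at $s=1$, inherited from the simple pole of the classical $\zeta$. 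The summands of $B$ are independent and centered, with variance $(1-1/\log m)\,m^{-2\sigma}/\log m$, and $\sum_{m}m^{-2\sigma}/\log m<\infty$ for every $\sigma>1/2$, so Kolmogorov's Two-Series Theorem delivers pointwise $\mathbb{P}$-a.s.\ convergence. The ``technical lemma'' announced in the introduction should then promote this to uniform a.s.\ convergence on compact subsets of $\{\operatorname{Re} s>1/2\}$ via Kolmogorov's maximal inequality applied to tail blocks $\sum_{m=N}^{M}(\epsilon_m-1/\log m)m^{-s}$ on a dense countable grid of $s$-values, combined with a Cauchy-integral derivative bound to yield equicontinuity. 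Exponentiating the resulting continuation of $\log\zeta(s,\omega)$ converts the logarithmic singularity at $s=1$ into a simple pole, proving~(a).

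For part~(b), the same decomposition forces large Gaussian fluctuations of $\log|\zeta(\sigma+it,\omega)|=\operatorname{Re}\log\zeta(\sigma+it,\omega)$ as $\sigma\downarrow 1/2$. The dominant random contribution is
\[
\operatorname{Re} B(\sigma+it,\omega) \;=\; \sum_{m\geq 4}\Bigl(\epsilon_m-\tfrac{1}{\log m}\Bigr)\frac{\cos(t\log m)}{m^{\sigma}},
\]
whose variance $V(\sigma,t)=\sum_{m\geq 4}\cos^2(t\log m)(1-1/\log m)\,m^{-2\sigma}/\log m$ diverges as $\sigma\downarrow 1/2$, because $\cos^2(t\log m)$ averages to $1/2$ and $\sum 1/(m\log m)=\infty$. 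Since the individual summands are bounded by $m^{-\sigma}$, Lyapunov's condition is readily verified, and a Lindeberg--Feller CLT yields $\operatorname{Re} B(\sigma_n+it,\omega)/\sqrt{V(\sigma_n,t)}\Rightarrow N(0,1)$ along any sequence $\sigma_n\downarrow 1/2$. Consequently $\mathbb{P}\bigl(\log|\zeta(\sigma_n+it,\omega)|>M\bigr)\to 1/2$ for every fixed $M$, so the Fatou lemma for $\limsup$ of events gives $\mathbb{P}(\limsup_n|\zeta(\sigma_n+it,\omega)|>e^M)\geq 1/2$. The event $\{\limsup_{\sigma\downarrow 1/2}|\zeta(\sigma+it,\omega)|=\infty\}$ is a tail event in the independent family $\{\epsilon_m\}$, since altering finitely many $\epsilon_m$ only changes $|\zeta|$ by a locally bounded factor, so Kolmogorov's 0-1 law promotes this probability to $1$.

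The main obstacle I anticipate is the upgrade in~(a) from the pointwise a.s.\ convergence supplied by the Two-Series Theorem to uniform a.s.\ convergence of $B(s,\omega)$ on compacta in $\{\operatorname{Re} s>1/2\}$: without uniformity the limit is not automatically holomorphic, and the exceptional $\mathbb{P}$-null set must be chosen independently of $s$ throughout each compact subdomain. This is precisely the role of the technical lemma and is the technical crux of the proof; the Lindeberg--Feller step in~(b) is standard once the divergence of $V(\sigma,t)$ has been quantified.
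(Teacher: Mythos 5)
Your part (a) follows essentially the same route as the paper: expand $\log\zeta(s,\omega)$ via the Euler product, dispose of the $k\geq 2$ terms using $p_n(\omega)\sim n\log n$, and split the $k=1$ sum into its mean and a centered fluctuation handled by Kolmogorov's Two-Series Theorem, with the mean recognized (through term-by-term differentiation, exactly as in the paper's $\phi'(s)=1-\zeta(s)$ computation) as an antiderivative of $1-\zeta$ and hence carrying a logarithmic singularity at $s=1$. You are in fact more careful than the paper on two points that matter: the upgrade from pointwise a.s.\ convergence to a.s.\ uniform convergence on compacta (needed for holomorphy of the limit and for a single null set to work throughout the region), and the observation that exponentiating $-\log(s-1)$ is what turns the logarithmic singularity of $\log\zeta$ into a genuine simple pole of $\zeta(s,\omega)$ — the paper's discussion of branches of the logarithm near $s=1$ never quite closes this loop.

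For part (b) you take a genuinely different route. The paper invokes an anti-concentration lemma (independent, uniformly bounded summands with $\operatorname{var}(S_n)\to\infty$ force $\mathbb{P}(|S_n|\leq M)\to 0$), concludes that the series $\sum\epsilon_m m^{-s}$ diverges a.s.\ on the critical line, and reads off the blow-up of $|\zeta|$; you instead run a Lindeberg--Feller CLT along a sequence $\sigma_n\downarrow 1/2$, apply Fatou for events to get $\mathbb{P}(\limsup=\infty)\geq 1/2$, and finish with Kolmogorov's 0--1 law. Both arguments rest on the same engine — the divergence of $\sum 1/(m^{2\sigma}\log m)$ at $\sigma=1/2$ — but your version makes explicit the step the paper leaves implicit, namely how unboundedness in probability at each fixed $\sigma_n$ is converted into an almost-sure statement about the $\limsup$ over $\sigma$ (the 0--1 law, justified by your correct remark that altering finitely many $\epsilon_m$ changes $|\zeta|$ only by a locally bounded nonvanishing factor). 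The one point you should still address in (b) is the contribution of the non-fluctuating pieces of $\log|\zeta(\sigma+it,\omega)|$ as $\sigma\downarrow 1/2$: the deterministic part $A(\sigma+it)$ is finite at $s=\tfrac12+it$ for $t\neq 0$, but the $k=2$ block $\tfrac12\sum_m\epsilon_m m^{-2s}$ has its own mean-part singularity at $2s=1$, so for $t=0$ it diverges (fortunately to $+\infty$, which only reinforces the conclusion) and for $t\neq 0$ one must check it stays a.s.\ bounded along $\sigma_n\downarrow 1/2$ so that it cannot cancel the Gaussian fluctuation of $\operatorname{Re}B$. With that caveat handled, your argument is sound and, if anything, more complete than the paper's.
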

\begin{remark}
Note that the second part of the Theorem implies that $\zeta(s, \omega)$ can't be extended analytically from the domain $\{Re s >1/2\}$ through the line $Re s=\frac{1}{2}$. 
\end{remark}

Let us recall that due to Riemann Conjecture (see, for example, \citep{riemann}) the classical $\zeta(\cdot)$ function has analytic continuation $\zeta(s)$, for $s \in \mathbb{C}$ with simple pole at $s=1$ and two groups of zeros: the so-called trivial zeros $s=-2, -4, \ldots-2r, \ldots$ and infinite set of zeros conjectured all to be on the critical line $Re s=1/2.$ 

\begin{remark}\label{extension}
[Extension of the Riemann Zeta Function]
Let us present the proof of the analytic continuation of the $\zeta(\cdot)$ into the critical strip $0<Re s<1$ without the use of the Gamma function and of the Poisson summation formula. 
For this, let us consider $\zeta(s)=\frac{1}{1^s}+\frac{1}{2^s}+\ldots$
Let us consider $A(s)=1-\frac{1}{2^s}+\frac{1}{3^s}-\ldots$\\
It is not difficult to prove that the series converges for $Re s>0$ and it is analytic in this region.
Then, we have that 
$$\zeta(s)=\sum_{k=1}^{\infty}\frac{1}{(2k-1)^s}+\frac{1}{2^s}\zeta(s).$$
That gives that $$\zeta(s)\left(1-\frac{1}{2^s}\right)=B(s).$$

Moreover, we have that 
$$A(s)=\sum_{k=2}^\infty \frac{1}{(2k-1)^{s}}-\frac{1}{2^s}\zeta(s)=B(s)-\frac{1}{2^s}\zeta(s).$$

Combining the previous two, we obtain that

$$\zeta(s)\left(1-\frac{1}{2^s}\right)=A(s)+\frac{1}{2^s}\zeta(s).$$

This implies that

$$\zeta(s)\left( 1-\frac{2}{2^s}\right)=A(s).$$
Thus, we have that

$$\zeta(s)=\frac{A(s)}{1-\frac{1}{2^{s-1}}}.$$

We obtain that the denominator has a pole for $s=1$ and the numerator is analytic for $Re s>0.$

\end{remark}

\section{Proof of the main result}

\begin{proof}[Proof of the main result]
We first prove part a).\\
Let us consider the regime $Re s >1.$ Then, we have that 

\begin{equation}
\zeta(s, \omega)=exp\left(\sum_{n=1}^{\infty}\frac{1}{p_n^s(\omega)}+\frac{1}{2}\sum_{n=1}^{\infty}\frac{1}{p_n^{2s}(\omega)}+\ldots \right).
\end{equation}
We note that since $p_n(\omega) \sim n\ln n,$ $\mathbb{P}$-a.s. the expression in the exponent excluding the first sum is analytic for $Re s > 1/2.$

Let us now investigate the convergence of the first sum. 

Let $S_1=\sum_{n=1}^{\infty}\frac{1}{p_n^s}=\frac{1}{2^s}+\frac{1}{3^s}+\sum_{n=3}^{\infty}\frac{1}{p_n^s(\omega)}.$ Let $S_1^{'}=\sum_{n=3}^{\infty}\frac{1}{p_n^s(\omega)}.$

We note that 
\begin{equation}
S_1^{'}=\sum_{n=3}^{\infty}\frac{\epsilon_n}{n^s},
\end{equation}
where $\epsilon_n$ are independent Bernoulli random variables with $\mathbb{P}(\epsilon_n=1)=\frac{1}{\ln n}, $ and $\mathbb{P}(\epsilon_n=0)=1-\frac{1}{\ln n}.$

In order to prove the $\mathbb{P}$-a.s. convergence of $S_1^{'}$, we use Kolmogorov's Two Series Theorem.

For this, we need to show the convergence of the series of expectations and the one of variances.

We have that 

\begin{equation}\label{expectation}
\mathbb{E}[S_1^{'}]=\sum_{n=3}^{\infty}\frac{1}{\ln n n^s}.
\end{equation}

Let $\phi(s)=\sum_{n=2}^{\infty}\frac{1}{ln n n^s}=\sum_{n=2}^{\infty}\frac{e^{-s ln n}}{ln n}, $ for $Re s>1.$
We thus have
$$\phi'(s)=-\sum_{n=2}^{\infty}e^{-s ln n}=1-\zeta(s).$$
We note that $\zeta(s)-1=\frac{1}{2^s}+\frac{1}{3^s}+\ldots$
We recall that by the analytic continuation of $\zeta(s)$ (see Remark \ref{extension}) we have that 
$$\zeta(s)=\frac{A(s)}{1-\frac{1}{2^{s-1}}}.$$

We note that $A(s)$ is analytic for $Re s>0$ (i.e. including the critical stripe and the critical line $Res =1/2$). 

Following \citep{Havil}, p.118, we obtain that the Riemann Zeta expansion around $s=1$ is given by

$$\zeta(s)=\frac{1}{s-1}+\sum_{n=0}^\infty \frac{(-1)^n}{n!}\gamma_n(s-1)^n.$$

The constants $\gamma_n$ are known as Stieltjes constants and these quantities can be computed via

$$\gamma_n=\lim_{m \to \infty}\left(\sum_{k=1}^m\frac{(ln k)^n}{k}- \frac{(ln (m))^{n+1}}{n+1} \right).$$

In our case in order to recover the function $\phi(s)$, we need to integrate the $\zeta(s)$. Following Remark \ref{extension} we obtain that $\zeta(s)$ can be analytically extended up to $Re s >0.$ Then, $\phi(s)$ as its integral can be extended in the same region. 
Due to the presence of the simple pole at $s=1$ for the Riemann Zeta function, its integral will involve the complex Logarithm function. Thus, we do not expect to have a unique analytic extensions of the $\phi(s)$ function up to $Re s >0.$  To see this, we provide some details in the following paragraphs.

First, we recall the exponential function with a complex parameter $$e^z=\sum_{n=0}^{\infty} \frac{z^n}{n !}.$$

\begin{definition} Let us consider $h(t)$ to be a continuous function defined on some interval $[a, b]$ satisfying $e^{h(t)}=\gamma(t)$. We call $h(t)$ a branch of $\log (z)$ along the curve $\gamma$.
\end{definition}

For a fixed $s_0$ in a domain in the complex plane, let $\gamma:[a, b] \rightarrow s_0$ be the constant curve. If $s=s_0$ is one solution of the equation $e^z=s_0$, then we obtain that $\left\{s_0+2k \pi i\right\}$ for $k \in \mathbb{Z}$, are all possible values of a branch of $\log h(s)$ at $s_0$.






Returning to the proof of our main result, we continue the analysis with the series of variances. For this, we obtain the following.

\begin{equation}
\operatorname{var}[S_1^{'}]=\sum_{n=3}^\infty \frac{1}{ln n}\left(1-\frac{1}{ln n}\right)\frac{1}{n^{2\sigma}},
\end{equation}
where we recall $s=\sigma+it.$
We note that this series converges for $Re s=\sigma >1/2$ (including at $s=1$). Thus, the Kolmogorov's Two-series Theorem gives the a.s. convergence for $Re s=\sigma >1/2.$


For part b) we use the following lemma \footnotemark{}. 

\footnotetext{Although we discussed a version of the proof of a similar lemma, we would like to acknowledge Davide Giraudo's detailed answer that we found on the math.stackexchange.com platform.}
\begin{lemma}
Suppose $\left(X_n\right)$ is a sequence of independent random variables with finite variances. We consider $S_n:=\sum_{j=1}^n X_j$ for the partial sums. Suppose
\begin{itemize}
\item 1. $\left(X_n\right)$ is uniformly bounded (i.e. there exists some $A>0$ such that for all $n \in \mathbb{N} \quad\left|X_n\right| \leq A$ a.s.)

\item 2. $\operatorname{var}\left(S_n\right) \rightarrow \infty$ as $n \rightarrow \infty$.

\end{itemize}

Then 

$$
\forall M>0 \quad \lim _{n \rightarrow \infty} \mathbb{P}\left(\left|S_n\right| \leq M\right)=0 $$

\end{lemma}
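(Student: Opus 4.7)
The plan is to reduce this anti-concentration statement to the Lindeberg central limit theorem and then control probabilities of shrinking intervals under the Gaussian limit. First I would center the summands by setting $\tilde X_j := X_j - \mathbb{E}[X_j]$ and $\tilde S_n := S_n - \mathbb{E}[S_n]$, so that $|\tilde X_j|\le 2A$ almost surely and $\operatorname{var}(\tilde S_n) = \operatorname{var}(S_n) =: B_n^2 \to \infty$. Writing $c_n := -\mathbb{E}[S_n]/B_n$, the target probability becomes $\mathbb{P}(|S_n|\le M) = \mathbb{P}(\tilde S_n/B_n \in J_n)$, where $J_n$ is the interval centered at $c_n$ with half-length $M/B_n$.

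Next I would verify the Lindeberg condition for the triangular array $\tilde X_j/B_n$, $j=1,\dots,n$. Since $|\tilde X_j|\le 2A$ uniformly and $B_n\to\infty$, for any $\varepsilon>0$ and every $n$ large enough that $\varepsilon B_n>2A$ we have $|\tilde X_j|\le 2A<\varepsilon B_n$ for all $j$, and consequently $B_n^{-2}\sum_{j=1}^n \mathbb{E}\bigl[\tilde X_j^2 \mathbf{1}_{\{|\tilde X_j|>\varepsilon B_n\}}\bigr]=0$. The Lindeberg CLT then yields $\tilde S_n/B_n \Rightarrow N(0,1)$ in distribution.

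It remains to show $\mathbb{P}(\tilde S_n/B_n \in J_n) \to 0$; this is the main obstacle, since $J_n$ both shrinks ($|J_n|=2M/B_n\to 0$) and may drift with $c_n$, so the continuous mapping theorem is not directly available. I would argue along subsequences. If $|c_n|\to\infty$ along some subsequence, tightness (which follows from convergence in distribution) gives $\limsup \mathbb{P}(\tilde S_n/B_n\in J_n)\le \limsup \mathbb{P}(|\tilde S_n/B_n|\ge |c_n|-M/B_n)=0$. If on the other hand $c_n$ remains bounded, one can extract a sub-subsequence along which $c_n\to c^*$; then for every fixed $\delta>0$ one has $J_n\subseteq [c^*-\delta,c^*+\delta]$ for all sufficiently large $n$, and convergence in distribution to the atomless law $N(0,1)$ yields $\limsup \mathbb{P}(\tilde S_n/B_n\in J_n)\le \mathbb{P}(N(0,1)\in[c^*-\delta,c^*+\delta])$, which tends to $0$ as $\delta\downarrow 0$. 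Combining the two cases completes the argument; the boundedness hypothesis $|X_j|\le A$ is what makes the Lindeberg condition essentially automatic once $B_n\to\infty$, so no further truncation is needed.
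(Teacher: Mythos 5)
Your proof is correct, but it takes a genuinely different route from the paper's. The paper's proof avoids the drift issue entirely by symmetrization: it introduces independent copies $(X_j')$ and $(X_j'')$, applies the Lindeberg CLT to the differences $X_j'-X_j''$ (which are automatically centered and bounded by $2A$, with variance of the partial sums $2\operatorname{var}(S_n)\to\infty$), concludes that $\mathbb{P}\left(\left|\sum_{j=1}^n (X_j'-X_j'')\right|\le M\right)\to 0$ because the limit law is atomless and the window shrinks around $0$, and then transfers this back to $S_n$ via the inclusion $\{|\sum X_j'|\le M/2\}\cap\{|\sum X_j''|\le M/2\}\subset\{|\sum(X_j'-X_j'')|\le M\}$ together with independence, which gives $\mathbb{P}(|S_n|\le M/2)^2\le \mathbb{P}(|\sum(X_j'-X_j'')|\le M)$. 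You instead center $S_n$ directly and confront the possibly non-convergent normalized drift $c_n=-\mathbb{E}[S_n]/B_n$ head-on, disposing of it by a subsequence compactness argument (extracting $c_n\to c^*\in[-\infty,+\infty]$ and treating the finite and infinite cases separately via tightness and the portmanteau bound for the atomless Gaussian limit). Both arguments hinge on the same observation that uniform boundedness plus $B_n\to\infty$ makes the Lindeberg condition trivial. What the paper's symmetrization buys is that the target interval is always centered at the origin, so no case analysis on the drift is needed; what your approach buys is that it works directly with the original sequence without doubling the probability space, at the cost of the subsequence argument. Your case analysis is exhaustive and each case is handled correctly, so the proof stands.
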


\begin{proof}[Sketch of the proof] One can show this result by considering independent copies $\left(X_j^{\prime}\right)_{j \geqslant 1}$ and $\left(X_j^{\prime \prime}\right)_{j \geqslant 1}$ of the sequence $\left(X_j\right)_{j \geqslant 1}$. The result follows via the bound $\left|X_j^{\prime}-X_j^{\prime \prime}\right| \leqslant 2A$, and the Lindenberg Central Limit Theorem. Indeed using these two one can show that for each positive $M$,

$$\lim _{n \rightarrow \infty} \mathbb{P}\left(\left|\sum_{j=1}^n X_j^{\prime}-X_j^{\prime \prime}\right| \leqslant M\right)=0 .$$

The following inclusion gives the desired result 

$$\left\{\left|\sum_{j=1}^n X_j^{\prime}\right| \leqslant \frac{M}{2}\right\} \cap\left\{\left|\sum_{j=1}^n X_j^{\prime \prime}\right| \leqslant \frac{M}{2}\right\} \subset\left\{\left|\sum_{j=1}^n X_j^{\prime}-X_j^{\prime \prime}\right| \leqslant M\right\}$$

\end{proof}

We observe that in our case \begin{equation}
\operatorname{var}[S_1^{'}]=\sum_{n=3}^\infty \frac{1}{ln n}\left(1-\frac{1}{ln n}\right)\frac{1}{n^{2\sigma}},
\end{equation} is divergent for $\sigma \leq 1/2$, for any $t \in \mathbb{R}.$ 
This gives that the series $S_1=\sum_{n=1}^{\infty}\frac{1}{p_n^s}$ is a.s. divergent when we approach the critical line on horizontal limits at any height. Thus, for any fixed $t \in \mathbb{R},$ we have $\mathbb{P}$-a.s. that
$$\limsup_{\sigma \to 1/2^{+}}|\zeta(\sigma+it)|=\infty.$$ 
 





In addition, we obtain that all the points on the critical line are singular points for $\zeta(s, \omega)$. Thus, we obtain that $\zeta(s, \omega)$
 cannot be analytically extended to any open domain that contains the critical line $Res=\sigma=1/2$ or subsets of it.

\end{proof}








\section{Problems in Number Theory in the language of Cram\'er quasi primes}

\subsection{A problem in Additive Number Theory}
In this subsection, we show the applicability of Probability Theory techniques such as the First Borel-Cantelli Lemma in the study of a problem from the topic of Additive Number Theory phrased in the language of quasi primes. 
Additive problems have been studied for a long time in Number Theory. One of the well-studied problems in this area concerns the representation of positive integers as a sum of four squares, which was solved by Lagrange in 1770. Other examples of problems include the ones in the area of 'Mixed-Power Problems'. Hardy and Littlewood have several conjectures on asymptotic formulas that arise from different representations of the integers. One of them concerns the number of representations of sufficiently large integer $n$ in the form $n=p+x^2+y^2$, where $p$ is a prime and $x$ and $y$ are integers. Linnick confirmed this conjecture in \citep{Linnik}.

Another famous problem (to the best of our knowledge, still open) proposed by Hardy and Littlewood concerns the asymptotic number of representations of a sufficiently large $n$ in the different form $n=p+m^2$, where $p$ is a prime and $m$ is an integer.

In the spirit of this problem, we consider a similar statement in the framework of the quasi primes. We prove the following result.





\begin{theorem}
For $n \geq 1$ let $A_n=\floor{e^{cn^\alpha}}$, for $\alpha<1/2$. Let $p_n(\omega)$ be the  quasi primes from the Cram\'er Model. Then, $\mathbb{P}$-a.s., there exists $n_0=n(\omega)$ such that for all $n \geq n_0$ one can find $i, j$ such that 
$$n=A_i+p_j(\omega).$$
\end{theorem}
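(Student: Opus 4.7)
The plan is to apply the first Borel--Cantelli lemma to the events
\[
E_n = \{\omega : n \neq A_i + p_j(\omega) \text{ for any admissible } i, j\}.
\]
The independence structure of the Bernoulli indicators $\epsilon_k$ in the Cram\'er model, together with the growth hypothesis $\alpha < 1/2$, will be used to show that $\sum_n \mathbb{P}(E_n) < \infty$; then Borel--Cantelli gives a $\mathbb{P}$-a.s.\ finite threshold $n_0(\omega)$ beyond which no $E_n$ occurs, which is precisely the statement.

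First, I fix $n$ and determine the set of admissible shift indices. Since $A_i = \floor{e^{ci^\alpha}}$ is eventually strictly increasing (because $A_{i+1}-A_i\to\infty$ for $\alpha>0$), the indices $i$ with $4 \leq A_i \leq n/2$ form an initial segment of size $I_n \sim ((\ln n)/c)^{1/\alpha}$ as $n\to\infty$. For each such $i$, set $m_i := n - A_i \geq n/2 \geq 4$, so $\epsilon_{m_i}$ is defined in the Cram\'er model, and $\ln m_i \leq \ln n$. Crucially, the indices $m_i$ are pairwise distinct (by eventual strict monotonicity of $A_i$), so the variables $\{\epsilon_{m_i}\}_{i \leq I_n}$ are mutually independent. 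The event $E_n$ is contained in $\bigcap_{i \leq I_n}\{\epsilon_{m_i}=0\}$, hence
\[
\mathbb{P}(E_n) \leq \prod_{i \leq I_n}\Bigl(1 - \tfrac{1}{\ln m_i}\Bigr) \leq \exp\Bigl(-\sum_{i \leq I_n}\tfrac{1}{\ln m_i}\Bigr) \leq \exp\Bigl(-\tfrac{I_n}{\ln n}\Bigr) \leq \exp\bigl(-C(\ln n)^{1/\alpha - 1}\bigr)
\]
for some constant $C = C(c,\alpha) > 0$ and all large $n$.

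Now I use the hypothesis $\alpha < 1/2$, which forces $1/\alpha - 1 > 1$. Consequently $(\ln n)^{1/\alpha - 1}$ grows strictly faster than $\ln n$, so for any fixed $k \geq 1$ one has $\exp(-C(\ln n)^{1/\alpha - 1}) \leq n^{-k}$ eventually. In particular $\sum_n \mathbb{P}(E_n) < \infty$, and the first Borel--Cantelli lemma yields $\mathbb{P}(E_n \text{ i.o.}) = 0$, completing the argument.

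The delicate points are the two uses of structure: justifying mutual independence (which needs the eventual distinctness of the $A_i$ and $m_i \geq 4$), and the sharpness of the exponent condition. The role of $\alpha < 1/2$ is precisely to make the number of candidate shifts up to height $n$, which is polylogarithmic of degree $1/\alpha$, beat the Cram\'er density $1/\ln n$ by more than one logarithmic factor; at $\alpha = 1/2$ this method only yields $\mathbb{P}(E_n) \lesssim \exp(-C \ln n)$, which need not be summable, and the approach breaks down for $\alpha > 1/2$.
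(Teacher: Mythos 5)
Your proposal is correct and follows essentially the same route as the paper: bound $\mathbb{P}(E_n)$ by the product $\prod_i\bigl(1-\tfrac{1}{\ln(n-A_i)}\bigr)$ over the polylogarithmically many admissible shifts, obtain $\mathbb{P}(E_n)\leq \exp\bigl(-C(\ln n)^{1/\alpha-1}\bigr)$ with $1/\alpha-1>1$ when $\alpha<1/2$, and conclude by the first Borel--Cantelli lemma. Your version is in fact slightly more careful than the paper's, since you explicitly justify the independence used in the product bound (distinctness of the indices $m_i=n-A_i$ and the restriction $m_i\geq 4$), which the paper takes for granted.
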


\begin{proof}
Let us consider the event
$$E_n:=\{\text{there is no } i, \hspace{1mm} j \hspace{1mm} \text{such that} \hspace{1mm} n=A_i+p_j(\omega)\}. $$

Then, for $\nu(n):=\#\{A_i\leq n\},$ we have that 

\begin{align}
\mathbb{P}(E_n) \leq \prod_{i=0}^{\nu(n)}\left(1- \frac{1}{\ln (n-A_i)}\right)\leq e^{-\sum_{i=0}^{\nu(n)}\frac{1}{\ln(n-A_i)}} \leq e^{-\frac{\nu(n)}{\ln n}}.
\end{align}

We note that 
$$\#\{i: A_i \leq n \}=\#\{i: e^{ci^\alpha} \leq n\}=\left(\frac{\ln n}{c}\right)^{\frac{1}{\alpha}}$$

Thus, for $\alpha<1/2$, we have that

\begin{equation}\label{Bound}
\mathbb{P}(E_n) \leq e^{-c_1\ln^{1+\epsilon} n}.
\end{equation}

Since $\sum_n\mathbb{P}(E_n) <\infty$, applying the First Borel-Cantelli Lemma, we obtain that there exists $N_0(\omega)$ such that for $n >N_0(\omega)$ we have the desirable representation $n=A_i+p_j(\omega)$.
\end{proof}

We note that the case $\floor{m^k}$ for some $m$ and $k$ suitably chosen via $m^k=e^{ci^{\alpha}},$ i.e. $k=\floor{\frac{ci^{\alpha}}{ln m}}.$ 


\subsection{A problem on the number of primes in a given set}

The second classical problem is related to the number of primes in a given infinite set of integers. For the primes, there is the Dirichlet  classical result:
Let $a_n=a_0+dn,$ $n \geq 0$, be an arithmetic progression and $(a_0, d)=1$, that is $a_0$ and $d$ are co-primes. Then, there are infinitely many primes of the form $a_0+dn$ and there exists asymptotic formula for $\#\{p: p=a_0+dt, t \leq n\}.$ For more details we refer the reader to \citep{Apostol}.  Along this known result, there are many open problems in the area. The following two problems are especially interesting.\\
\textbf{Problem 1:} Let $\{p_n, n \geq 1\}=\{2, 3, 5, \ldots, \}$ be the sequence of primes. Let us consider $m_n=2^{p_n}-1$, $n \geq 1$, be the Mersenne numbers: $\{3, 7, 31, 127, \ldots\}$. The primes that are Mersenne numbers are called Mersenne primes. Many of the largest known primes are Mersenne primes. In particular, the largest known prime number $2^{82,589,933}-1$ is a Mersenne prime. The first composite Mersenne number is $m_{11}=23\cdot 89.$ It is conjectured that there are infinitely many primes among the Mersenne numbers.\\
\textbf{Problem 2:} Let $F_{n+1}=F_n+F_{n-1}$, $F_0=1$, $F_1=1$ be the sequence of Fibonacci numbers. One can find large subsets of composite Fibonacci numbers. It is conjectured that the numbers of primes among the Fibonacci numbers $\{F_n, n \geq 0\}$ is infinite.

In the spirit of the aforementioned problems in this area, we obtain in the framework of quasi-primes, as a corollary of the Borel-Cantelli Lemma, the following result. 

\begin{proposition}
Let $X=\{x_k, k \geq 1\}$ be a sequence of integers with $x_k \geq 3$, and $\sum_{k=1}^{\infty}\frac{1}{\ln x_k}=\infty. $ Then, $\mathbb{P}$-a.s. the set $X$ contains infinitely many Cram\'er quasiprimes. 
\end{proposition}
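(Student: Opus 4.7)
The plan is to apply the second Borel--Cantelli lemma directly to the indicator events of being a Cram\'er quasi-prime along the sequence $\{x_k\}$.

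First, without loss of generality I may assume the $x_k$ are distinct (otherwise, pass to the subsequence of distinct values; since $\sum_k \frac{1}{\ln x_k}=\infty$ with $x_k\ge 3$ and any given integer contributes only a bounded amount, the sum over the distinct values still diverges). I may also discard any $x_k\in\{2,3\}$ since they are quasi-primes by definition and there are only finitely many such terms. So I reduce to the case where $x_k\ge 4$ and all $x_k$ are distinct.

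Next I define, for each $k\ge 1$, the event
\begin{equation*}
E_k := \{\,x_k\in\Pi(\omega)\,\} = \{\,\epsilon_{x_k}=1\,\},
\end{equation*}
where $\epsilon_n$ denotes the Bernoulli indicator from the Cram\'er model introduced in Section~2. By construction of the model, the family $\{\epsilon_n\}_{n\ge 4}$ is independent, so the events $\{E_k\}_{k\ge 1}$, being indexed by the distinct integers $x_k$, are independent as well, with $\mathbb{P}(E_k)=\tfrac{1}{\ln x_k}$.

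The hypothesis $\sum_{k=1}^{\infty}\frac{1}{\ln x_k}=\infty$ then gives $\sum_k \mathbb{P}(E_k)=\infty$, and the second Borel--Cantelli lemma, applied to the independent events $\{E_k\}$, yields
\begin{equation*}
\mathbb{P}\bigl(E_k \text{ occurs for infinitely many } k\bigr)=1,
\end{equation*}
which is precisely the statement that $X$ contains infinitely many Cram\'er quasi-primes $\mathbb{P}$-a.s. There is no serious obstacle here; the only points requiring care are the reduction to distinct $x_k\ge 4$ (so the independence of the $\epsilon$-variables may be invoked) and the verification that this reduction preserves the divergence of the series, both of which are routine.
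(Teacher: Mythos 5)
Your proof is correct and takes essentially the same route as the paper: apply the second Borel--Cantelli lemma to the independent events $\{\epsilon_{x_k}=1\}$, whose probabilities sum to $\sum_k \tfrac{1}{\ln x_k}=\infty$. One caveat on your reduction: the claim that passing to distinct values preserves divergence is false if some value repeats infinitely often (e.g.\ $x_k=5$ for all $k$ makes the series diverge while $X=\{5\}$ is finite), so distinctness must instead be read as part of the hypothesis ($X$ is a set of integers), which is what the paper implicitly assumes.
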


In order to show this, one has the probability of $x_k$ to be quasi-prime is $\frac{1}{ln x_k}$, then by the condition in the hypothesis of the proposition using the Second Borel-Cantelli Lemma we have that $\mathbb{P}$-a.s the event happens infinitely often, and the conclusion follows.

\section{Pseudoprimes with Local Symmetry}

In this section, we introduce a new random model of the primes that exhibit local symmetries and study the corresponding Random Riemann Zeta Function.

Let us consider $A_{n}, B_{n}$, $n\geq 1$ such that $A_n:=1+2B_1+\ldots+2B_{n-1}$ and $A_n+2B_n=A_{n+1}$.

Let us consider $C_n=A_n+B_n.$ Let $\xi_1$, $\xi_2$, ...$\xi_k$ be uniformly distributed on the left interval intersected with the integers, that is on the interval $[A_n, C_n]\cap \mathbb{Z}$. On the right symmetric part, we reflect the points. We use the notation $\xi_{(1)}, \ldots, \xi_{(k)}$ for the reflected points. Thus, using this method we create a correlated model. We emphasize that this is different from the Cram\'er Model of the quasi primes from before in which we have independence embedded in the model. Let us consider, $j \in \{1,\ldots 2k\}$ with the convention that $j=(j-k)$ for $j\geq k,$ then we have $P_{n,j}=C_n-\xi_j$, for $j < k$, and $P_{n,j}=C_n+\xi_{(j)}$, for $j  \geq k$. That is, using $C_n$ as a reference we have that the first $k$ pseudo-primes are obtained by extracting from $C_n$ the values $\xi_j$, while the next $k$ pseudo-primes by symmetrically adding to $C_n$ the values $\xi_{(j)}$.

We now recall that for the Riemann Zeta function, we have that 
\begin{equation}\label{Eulerformula}
\zeta(s)=\sum_{n=1}^{\infty}\frac{1}{n^s}=\prod_{n=1}^{\infty}\frac{1}{(1-\frac{1}{p_n^s})}, 
\end{equation}
where $\{p_n, n\geq 1\}=\{2,3,5,\ldots \}$ is the set of true primes.

As before, using the similarity with \eqref{Eulerformula}, we define using our new model of pseudo-primes, the corresponding random $\tilde{\zeta}=\tilde{\zeta}(s, \omega)$ function by the formula 

\begin{equation}
\tilde{\zeta}(s, \omega)=\prod_{j=1}^{\infty}\frac{1}{(1-\frac{1}{P^s_{n,j}(\omega)})}
\end{equation}

in the beginning for $Re s >1.$





\subsection{Analytic Continuation of the corresponding Random Riemann Zeta Function}

In  this subsection, we study the analytic continuation of the corresponding Riemann Zeta Function for our model of pseudo-primes. 

Let us consider the regime $Re s >1.$ Then, as before we have that 

\begin{equation}\label{expresNEW}
\tilde{\zeta}(s, \omega)=exp\left(\sum_{n=1, j=1, \ldots ,2k}^{\infty}\frac{1}{P_{n,j}^s(\omega)}+\frac{1}{2}\sum_{n=1, j=1, \ldots, 2k}^{\infty}\frac{1}{P_{n,j}^{2s}(\omega)}+\ldots \right).
\end{equation}

Let us focus on the first sum.
For a fixed $n \geq 1,$ we obtain the following.  

\begin{align}
&\frac{1}{(C_n-\xi_1)^s}+\cdots + \frac{1}{(C_n+\xi_{(k)})^s}\nonumber\\&=\frac{1}{C_n^s}\left(\left(1-\frac{\xi_1}{C_n}\right)^{-s}+\ldots+\left(1+\frac{\xi_{(k)}}{C_n}\right)^{-s}\right)
\end{align}
For $n$ large enough we have that

\begin{equation}
\frac{1}{C_n^s}\left(1-\frac{\xi_1}{C_n}(-s)+1+\frac{\xi_{(1)}}{C_n}(-s)+\ldots+1+\frac{\xi_{(k)}}{C_n}(-s)+O(C_n^{-2})\right)
\end{equation}

By sign cancellations due to the symmetry of the model, we further obtain

\begin{equation}
\frac{1}{C_n^s}\left(2k+2\frac{s(s+1)}{2}\frac{\xi_{1}^2}{C_n^2}+ \ldots   \right)
\end{equation}

Let us  consider $\xi_i=\floor{B_n\eta_i}$ with $\eta_i \sim U(0,1)$.

Then, using the Central Limit Theorem, we have that 

\begin{equation}\label{last}
\frac{2k}{C_n^s}+2\frac{s(s+1)}{2}\frac{\sum_{i=1}^k\xi_i^2}{C_n^{2+s}} = \frac{2k}{C_n^s}+\frac{ks(s+1)B_n^2}{6 C_n^{2+s}}+\frac{2\psi\sqrt{k}B_n^2}{\sqrt{45}C_n^{2+s}}+O(C_n^{-3-s})
\end{equation}
where $\psi \sim$ $\mathcal{N}(0,1)$. 

Let $\beta>0$. For $B_n=n^\beta$ we have that \eqref{last} is upper bounded by 

\begin{equation}
\frac{2k}{n^{s\beta}}+ \frac{ks(s+1)n^{2\beta}}{n^{\beta(2+s)}}+O(n^{-\beta(3+s)})
\end{equation}

For example, let us pick $k=\sqrt{n}$. Then, we get that for $\epsilon=Res>0$, we get that the series is convergent a.s. for $\beta>\frac{3}{2\epsilon}$. The condition gives also the convergence of the following series in \eqref{expresNEW}. This serves as an indication that once we improve the Cram\'er model by requiring certain local symmetries, the corresponding Random Riemann Zeta can be a.s. defined on a larger region than the region $Res>1/2$.

\bibliographystyle{plainnat}
\bibliography{bib}

\end{document}